\documentclass[12pt,a4paper,reqno,oneside]{amsart}

\usepackage{cite}

\textwidth=160mm
\textheight=230mm
\oddsidemargin=-0.4mm
\topmargin=-5.4mm

\newcommand{\dd}{\mathrm{d}}

\newcommand{\spec}{\mathop{\mathrm{spec}}}
\newcommand{\spece}{\mathop{\mathrm{spec}}\nolimits_\mathrm{ess}}
\newcommand{\specd}{\mathop{\mathrm{spec}}\nolimits_\mathrm{disc}}

\newcommand{\ZZ}{\mathbb{Z}}
\newcommand{\RR}{\mathbb{R}}

\newcommand{\NN}{\mathbb{N}}

\newcommand{\SSS}{\mathbb{S}}

\newcommand{\cO}{\mathcal{O}}
\newcommand{\cD}{\mathcal{D}}

\newcommand{\supp}{\mathop{\mathrm{supp}}}

\newtheorem{theorem}{Theorem}

\newtheorem{cor}[theorem]{Corollary}
\newtheorem{lemma}[theorem]{Lemma}


\theoremstyle{definition}

\newtheorem{rem}[theorem]{Remark}

\sloppy

\begin{document}

\title[]{On the discrete spectrum of Robin Laplacians in~conical domains}

\author{Konstantin Pankrashkin}

\address{Laboratoire de math\'ematiques (UMR 8628 du CNRS), Universit\'e Paris-Sud, B\^atiment 425, 91405 Orsay Cedex, France}

\email{konstantin.pankrashkin@math.u-psud.fr}
\urladdr{http://www.math.u-psud.fr/~pankrash/}

\begin{abstract}
We discuss several geometric conditions guaranteeing the finiteness or the infiniteness
of the discrete spectrum for Robin Laplacians on conical domains.
\end{abstract}

\keywords{Laplacian, Robin boundary condition, eigenvalue, spectrum}

\subjclass{35P15, 35J05, 49R05, 58C40}

\maketitle

\section{Introduction}

Let $\Omega\subset \RR^\nu$ be an open set with a sufficiently regular
boundary. For $\alpha>0$, denote by $Q^\Omega_\alpha$ the self-adjoint operator in $L^2(\Omega)$
acting as the Laplacian $u\mapsto -\Delta u$ in $\Omega$ with the Robin boundary condition
$\partial u/\partial n=\alpha u$ on $\partial\Omega$, where $\alpha>0$ is a fixed constant
and $\partial/\partial n$ means the derivative in the direction of the outer unit normal $n$.
The above definition should be understood in a suitable weak sense:
$Q^\Omega_\alpha$ is the self-adjoint operator in $L^2(\Omega)$
generated by the quadratic form
\[
q^\Omega_\alpha(u,u)=\int_{\Omega} |\nabla u|^2\dd x -\alpha \int_{\partial\Omega} u^2\dd \sigma,
\quad \cD(q^\Omega_\alpha)=H^1(\Omega),
\]
where $\sigma$ stands for the $(\nu-1)$-dimensional Hausdorff measure on $\partial\Omega$.
The study of the spectral properties of $Q^\Omega_\alpha$ arises in numerous applications.
For example, the paper~\cite{lacey} shows a link between the eigenvalue problem and the
long-time dynamics related to some reaction-diffusion process. The paper \cite{bol} discusses the stochastic meaning of the eigenvalues.
Various properties of $Q^\Omega_\alpha$ appear to be of importance
for problems of surface superconductivity, see~\cite{gs1,ow}.
In the present contribution, we are going to discuss some spectral properties of $Q^\Omega_\alpha$ for a special class
of non-compact domains~$\Omega$.

By a \emph{cone} we mean a connected Lipschitz domain $\Lambda\subset\RR^\nu$, $\nu\ge 2$, for which
there exists a point $V\in\RR^\nu$ (vertex of the cone) such that for any $x\in\RR^\nu$
the condition $V+x\in\Lambda$ implies $V+tx\in\Lambda$ for all $t> 0$.
A cone is called \emph{smooth} if its boundary is non-empty and is $C^2$ with the possible
exception of a vertex. Clearly, for $\nu\ge 3$ a cone $\Lambda$ with a vertex $V$ is smooth iff its \emph{cross section}
\[
\Sigma_\Lambda:=\{y\in \RR^\nu, |y|=1:\, V+y\in \Lambda \}\subset \SSS^{\nu-1}
\]
is a domain of $\SSS^{\nu-1}$ with a non-empty $C^2$ boundary, while in $\RR^2$
any cone is smooth. A Lipschitz domain $\Omega$ is called  a \emph{conical} one
if there exists a cone $\Lambda$ such that the $\Omega$ coincides with $\Lambda$ outside a ball, and
the associated cone $\Lambda\equiv\Lambda(\Omega)$ is then uniquely defined.
A conical domain is said to be \emph{smooth at infinity} if the associated
cone is smooth.

The Robin Laplacians on cones play a special role in the study of the bottom of the spectrum $E_1^\Omega(\alpha)$
of $Q^\Omega_\alpha$
as $\alpha$ becomes large. As shown by Levitin and Parnovski \cite{lp}, for piecewise smooth $\Omega$
one has
\begin{equation}
       \label{eq-asymp}
E_1^\Omega(\alpha)=-C_\Omega \alpha^2 +o(\alpha^2) \text{ as $\alpha$ tends to $+\infty$}
\end{equation}
with
$C_\Omega=-\inf_{x\in\partial\Omega} E_1^{\Lambda_x}(1)$, where $\Lambda_x$ is the tangent cone
at $\partial\Omega$ at the point $x$. The paper \cite{lp} also presented several cases
for which the quantity $E_1^{\Lambda_x}(1)$ can be calculated explicitly using some constructions of the spherical geometry.
We remark that there are various refinements of the asymptotics \eqref{eq-asymp}, see e.g. \cite{DaKe10,em,ExMinPar14,hp,hk,LouZhu04,Pank13,kp14,pp,pp15}.

On the other hand, conical geometries attracted an attention in the study of Laplace-type operators
due to the fact that they may produce infinitely many discrete eigenvalues.
It seems that such an effect was found for the first time by Exner and Tater \cite{ET}
who showed that the Dirichlet Laplacian in a rotationally symmetric conical layer
in three dimensions has an infinite discrete spectrum. Recently, Dauge, Ourmi\`eres-Bonafos and Raymond \cite{ROBD} provided a more detailed study by calculating the accumulation rate of the eigenvalues. Behrndt, Exner and Lotoreichik \cite{BEL}
showed the infiniteness of the discrete spectrum for the Schr\"odinger operators with $\delta$-interaction
on rotationally symmetric conical surfaces or their compactly supported perturbations.
We remark that all these works used the invariance of the domain with respect to the rotations.

The principal aim of the present note is to discuss the cardinality of the discrete spectrum
for Robin Laplacians in conical domains smooth at infinity. In particular, we will show that
the infiniteness of the discrete spectrum can be guaranteed by a rather simple sufficient condition:
the strict positivity of the mean curvature of the boundary on a half-line originating from a vertex, see Theorem~\ref{thhh}.
Furthermore, in three dimensions this condition is (in a sense) necessary as well, see Corollary~\ref{corlast3}.
Our constructions are based on a reduction of the analysis of conical domains to
the case of strongly coupled Robin Laplacians carried out by Pankrashkin and Popoff~\cite{pp15}.

Recall that the principal curvatures of a $(\nu-1)$-dimensional smooth submanifold $S$ in $\RR^\nu$
at a point $s$ are defined as the eigenvalues of the shape operator $\dd n(s)$, where $s\mapsto n(s)$
is a smooth unit normal and $\dd$ stands for the differential. The mean curvature $H(s)$ is then the arithmetic mean
of the principal curvatures, i.e. $H(s):=\frac{1}{\nu-1}\mathop{\mathrm{tr}} \dd n(s)$.
The definition depends on the orientation (the change of the orientation implies the sign change
for the curvatures), but in all the cases below we deal with surfaces which are boundaries of open sets,
and the mean curvature will always be calculated with respect
to the \emph{outer} unit normal. In particular, the principal curvatures and the mean curvature of
the boundary of a convex domain are non-negative.

Let us also recall the min-max principle for the eigenvalues. Let $Q$ be a self-adjoint operator
semibounded from below and $q$ be its quadratic form. Let $N\in \NN$ and
\[
E_N(Q):=\inf_{\substack{L\subset \cD(q)\\ \dim L= N}} \sup_{u\in L,\, u\ne 0} \dfrac{q(u,u)}{\|u\|^2},
\]
then
\begin{itemize}
\item  either $E_N(Q)<\inf\spece Q$ and $E_N(Q)$ is the $N$th eigenvalue of $Q$ when numbered in the non-decreasing order
and counted with multiplicities,
\item or $E_k(Q)=\inf\spece Q$ for all $k\ge N$.
\end{itemize}

\section{Essential spectrum}

In order to be able to use the min-max principle we give
first a quite standard result on the essential spectrum.

\begin{theorem}\label{thm1a}
Let $\nu \ge 2$ and $\Omega\subset\RR^\nu$ be a conical domain smooth at infinity, then 
$\spece Q^\Omega_\alpha=[-\alpha^2,+\infty)$ for any $\alpha>0$.
\end{theorem}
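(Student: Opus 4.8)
The plan is to prove the two inclusions $\spece Q^\Omega_\alpha\subseteq[-\alpha^2,+\infty)$ and $[-\alpha^2,+\infty)\subseteq\spece Q^\Omega_\alpha$ separately, using the fact that the essential spectrum is governed only by the behaviour of $\Omega$ near infinity, where $\Omega$ coincides with the smooth cone $\Lambda=\Lambda(\Omega)$. To make this precise I would invoke a Persson-type characterization of $\inf\spece Q^\Omega_\alpha$, namely that
\[
\inf\spece Q^\Omega_\alpha=\sup_{R>0}\ \inf\Big\{q^\Omega_\alpha(u,u):u\in H^1(\Omega),\ \supp u\subset\Omega\setminus B_R,\ \|u\|=1\Big\},
\]
so that only test functions supported far from the vertex and far from the compact perturbation region matter. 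Since $\Omega$ and $\Lambda$ agree outside a ball, the essential spectra of $Q^\Omega_\alpha$ and $Q^\Lambda_\alpha$ coincide, and it suffices to treat the pure cone $\Lambda$.

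\medskip

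For the lower bound $\inf\spece Q^\Omega_\alpha\ge-\alpha^2$, I would argue that on the smooth cone the boundary near infinity looks locally flat (its curvatures decay like $1/r$ as one moves out along the cone), so the Robin form is, up to lower-order curvature terms, comparable to the model half-space Robin problem. The half-space operator with form $\int|\nabla u|^2-\alpha\int_{\partial}u^2$ has spectrum $[-\alpha^2,+\infty)$, the value $-\alpha^2$ coming from the trace inequality that for any $\varepsilon>0$ one has $\alpha\int_{\partial\Omega}u^2\,\dd\sigma\le\varepsilon\int_\Omega|\nabla u|^2\,\dd x+C(\varepsilon)\alpha^2\int_\Omega u^2\,\dd x$, refined so that $C(\varepsilon)\to1$ as $\varepsilon\to0$ for test functions supported near a smooth flat boundary. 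Feeding this into the Persson infimum, and controlling the curvature contributions which become negligible on $\Omega\setminus B_R$ as $R\to\infty$, yields $q^\Omega_\alpha(u,u)\ge-\alpha^2\|u\|^2$ in the limit, hence $\inf\spece Q^\Omega_\alpha\ge-\alpha^2$.

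\medskip

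For the reverse inclusion $[-\alpha^2,+\infty)\subseteq\spece Q^\Omega_\alpha$, I would construct, for each $\lambda\ge-\alpha^2$, a singular Weyl sequence. The natural building block is the one-dimensional ground state $x_\nu\mapsto e^{-\alpha x_\nu}$ of the transverse Robin problem on a half-line, which realizes the bottom $-\alpha^2$, multiplied by a longitudinal oscillating factor $e^{ik\cdot x'}$ tuned so that $|k|^2-\alpha^2=\lambda$; these are approximate eigenfunctions of the flat half-space model. I would then transplant such functions onto a piece of the boundary of $\Lambda$ far from the vertex, where the boundary is nearly flat over a large scale, using cutoffs supported in successively distant regions so that the sequence converges weakly to zero while $\|(Q^\Omega_\alpha-\lambda)u_n\|\to0$. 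The commutator and curvature error terms produced by the cutoffs and by the bending of the boundary scale inversely with the distance to the vertex, so they vanish along the sequence.

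\medskip

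The main obstacle I expect is the quantitative control of the curvature and the boundary geometry on $\Omega\setminus B_R$: one must show rigorously that near infinity the Robin problem on the smooth cone is a genuinely small perturbation of the flat half-space model, uniformly enough that both the Persson lower bound and the Weyl sequence construction go through. Concretely this means establishing that the second fundamental form of $\partial\Lambda$ decays like $1/r$ and that the associated change of variables flattening the boundary introduces only errors controlled by the same rate; the spherical cross-section $\Sigma_\Lambda$ being $C^2$ with nonempty boundary is exactly what guarantees this decay. The remaining steps are then routine adaptations of standard half-space Robin spectral analysis.
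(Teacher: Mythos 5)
Your overall skeleton matches the paper's: a Persson-type characterization of $\inf\spece Q^\Omega_\alpha$, reduction to the associated cone $\Lambda=\Lambda(\Omega)$ because the two domains agree outside a ball, and an explicit Weyl-sequence construction for the inclusion $[-\alpha^2,+\infty)\subset\spec Q^\Omega_\alpha$. (The paper builds its sequence in spherical coordinates, $u_N(r,\theta)=\psi_N(r)\sin(kr)e^{-\alpha r d(\theta)}\varphi_N\big(rd(\theta)\big)$, the radial oscillation $\sin(kr)$ playing the role of your tangential factor $e^{ik\cdot x'}$; your local-flattening version is equivalent in spirit and can be made to work with patch sizes chosen suitably relative to the distance to the vertex.) The genuine divergence is in the lower bound $\inf\spece Q^\Omega_\alpha\ge-\alpha^2$. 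The paper does \emph{not} estimate curvature errors at fixed $\alpha$: it smooths the vertex to obtain a $C^2$ domain $\Theta$, quotes from \cite{pp15} the asymptotic bound $\inf\spece Q^\Theta_\alpha\ge-\alpha^2-(\nu-1)H_\infty\alpha+o(\alpha)$ as $\alpha\to+\infty$, with $H_\infty=\limsup_{s\to\infty}H(s)=0$ for a conical domain, and then --- this is the step your proposal lacks --- uses the dilation invariance of the cone, $\inf\spece Q^\Lambda_\alpha=\alpha^2\inf\spece Q^\Lambda_1$, to upgrade a statement that is only asymptotic in $\alpha$ into the exact inequality for \emph{every} $\alpha>0$. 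Your route instead attempts a direct fixed-$\alpha$ estimate from the $1/r$ decay of the second fundamental form; this is feasible in principle, but it amounts to redoing the technical core of \cite{pp15} (tubular coordinates of thickness growing with the distance to the vertex, injectivity of the normal map at that scale, Jacobian control), which you correctly identify as ``the main obstacle'' but do not resolve. So your lower bound is an outline of a harder proof, where the paper gives a complete short one modulo a citation.

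Moreover, one stated ingredient of your lower bound is false: there is no trace inequality $\alpha\int_{\partial\Omega}u^2\,\dd\sigma\le\varepsilon\int_\Omega|\nabla u|^2\,\dd x+C(\varepsilon)\,\alpha^2\int_\Omega u^2\,\dd x$ with $C(\varepsilon)\to1$ as $\varepsilon\to0$, even for a flat half-space. Testing on $u(x)=e^{-\beta x_\nu}$ (times a tangential cutoff) and optimizing over $\beta>0$ forces $C(\varepsilon)\ge 1/\varepsilon$, so the constant necessarily blows up exactly in the regime $\varepsilon\to0$ you invoke. The sharp usable statement is the case $\varepsilon=1$, $C=1$, equivalent to the one-dimensional Robin bound $\int_{\RR_+}v'(t)^2\,\dd t-\alpha v(0)^2+\alpha^2\int_{\RR_+}v(t)^2\,\dd t\ge0$, which is precisely the inequality the paper exploits in the proof of Theorem~\ref{thm2a}; curvature corrections must then be absorbed by perturbing the Robin constant and the Jacobian (replacing $\alpha$ by $\alpha\big(1+\cO(\delta/R)\big)$ and $J$ by $1+\cO(\delta/R)$), not by sending $\varepsilon\to0$. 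With that repair, and with the tubular-neighborhood analysis actually carried out, your fixed-$\alpha$ argument would go through; but as written, the step ``feeding this into the Persson infimum yields $q^\Omega_\alpha(u,u)\ge-\alpha^2\|u\|^2$ in the limit'' rests on an inequality that cannot hold.
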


\begin{proof}
An easy adaptation of Persson's theorem \cite[Section 14.4]{hs} gives
\[
      \label{eq-pears}
\inf \spece Q^\Omega_\alpha=
\sup_{K} \inf \Big\{
q^\Omega_\alpha(u,u): \, u\in C^\infty_c(\RR^\nu\setminus K), \,
\|u\|_{L^2(\Omega)}=1
\Big\},
\]
where the supremum is over all compact subsets $K\subset\RR^\nu$.
Consider the cone $\Lambda:=\Lambda(\Omega)$ and, furthermore, let $\Theta$
be a \emph{$C^2$ smooth} domain coinciding with $\Omega$ (and, hence, with $\Lambda$) outside a ball.
The above characterization of the bottom of the essential spectrum implies
\begin{equation}
   \label{eq-ess1}
\inf \spece Q^\Omega_\alpha
=
\inf \spece Q^\Theta_\alpha
=
\inf \spece Q^\Lambda_\alpha
\end{equation}
for any $\alpha$. We may use some results of~\cite{pp15} to study $Q^\Theta_\alpha$.
In particular, see \cite[Section~7]{pp15}, for $\alpha\to +\infty$ one has
\[
\inf \spece Q^\Theta_\alpha\ge -\alpha^2- (\nu-1)H_\infty \alpha +o(\alpha),
\quad
H_\infty:=\limsup_{s\to\infty,\,s\in\partial\Omega} H(s),
\]
where $H$ is the mean curvature. In our case one has clearly $H_\infty=0$, hence, 
$\inf \spece Q^\Lambda_\alpha=\inf \spece Q^\Theta_\alpha\ge -\alpha^2+o(\alpha)$.
On the other hand, using the invariance of $\Lambda$ with respect to the dilations we obtain 
\[
\inf\spece Q^\Lambda_\alpha= \alpha^2 \inf\spece Q^\Lambda_1,
\]
which gives $\inf\spece Q^\Lambda_1\ge -1$ and then $\inf \spece Q^\Lambda_\alpha\ge -\alpha^2$ for any $\alpha>0$. Finally, using \eqref{eq-ess1},
\begin{equation}
   \label{eq-q1}
\spece Q^\Omega_\alpha \subset [-\alpha^2,+\infty) \text{ for any } \alpha>0.
\end{equation}

Now we are going to show the inclusion
\begin{equation}
   \label{eq-q2}
	[-\alpha^2,+\infty)\subset \spec Q^\Omega_\alpha.
\end{equation}
This can be easily done by constructing approximate eigenfunctions.
Namely, let $\phi \in C^\infty(\RR)$ with $\phi(x)=0$ for $x\le 0$
and $\phi(x)=1$ for $x\ge 1$. For $N\in\NN$, set
\[
\psi_N(x):=\phi(x-N) \phi(2N-x), \quad
\varphi_N(x):= \phi(\sqrt N -x).
\]
Let $V$ be a vertex of the cone $\Lambda$. Denote by $(r,\theta)\in \RR_+\times \SSS^{\nu-1}$
the spherical coordinates centered at $V$. Furthermore, for $\theta\in \SSS^{\nu-1}$, let $d(\theta)$
denote the geodesic distance in $\SSS^{\nu-1}$ between $\theta$ and the boundary $\partial \Sigma$
of the cross section $\Sigma=\Sigma_\Lambda$. Take an arbitrary $k> 0$ and consider
the functions $u_N$ given in the spherical coordinates by
\[
u_N(r,\theta)=\psi_N(r)\sin(kr) e^{-\alpha r d(\theta)} \varphi_N\big(rd(\theta)\big).
\]
One easily checks that for large $N$  one has $u_N\in \cD(Q^\Omega_\alpha)$ with
$\|u_N\|_{L^2(\Omega)}\ge c N^{\frac{\nu-1}{2}}$,
where $c>0$ is independent of $N$.
Recall that the Laplacian in the spherical coordinates takes the form
\[
-\Delta \simeq -\dfrac{\partial^2}{\partial r} -\dfrac{\nu-1}{r} \dfrac{\partial}{\partial r} -\dfrac{1}{r^2} \Delta_{\SSS^{\nu-1}},
\]
where $\Delta_{\SSS^{\nu-1}}$ is the Laplace-Beltrami operator on $\SSS^{\nu-1}$.
One easily checks that
for the functions $f$ of the form $f(\theta):=b\big(d(\theta)\big)$ we have,
if $d(\theta)<\varepsilon$ and $\varepsilon$ is small,
\[
\Delta_{\SSS^{\nu-1}} f (\theta)= \big(1+\cO(\varepsilon)\big) b''\big(d(\theta)\big)
+\cO(\varepsilon) b'\big(d(\theta)\big).
\]
In particular, for large $N$ we have
\begin{multline*}
\Delta_{\SSS^{\nu-1}} u_N(r,\theta) = \psi_N(r)\sin(kr) r^2 e^{-\alpha r d(\theta)} \\
\times\bigg\{
\Big(1+ \cO(N^{-1/2})\Big) \cdot \Big[\alpha^2  \varphi_N\big(rd(\theta)\big)
-2\alpha \varphi'_N\big(rd(\theta)\big)+\varphi''_N\big(rd(\theta)\big) \Big]\\
+\cO(N^{-3/2})\Big[ -\alpha \varphi_N\big(rd(\theta)\big) + \varphi'_N\big(rd(\theta)\big)\Big]
\bigg\},
\end{multline*}
and elementary estimates give, for large $N$,
\[
\big\|Q^\Omega_\alpha  u_N -(k^2-\alpha^2) u_N \big\|_{L^2(\Omega)}
=\cO(N^{\frac{\nu-2}{2}}).
\]
Therefore,
\[
\dfrac{\big\|Q^\Omega_\alpha  u_N -(k^2-\alpha^2) u_N \big\|_{L^2(\Omega)}}{\|u_N\|_{L^2(\Omega)}}\xrightarrow{N\to+\infty}0,
\]
which gives $k^2-\alpha^2\in \spec Q^\Omega_\alpha$. As $k>0$ is arbitrary and $\spec Q^\Omega_\alpha$
is a closed set, Eq.~\eqref{eq-q2} follows.
As the set $[-\alpha^2,+\infty)$ has no isolated points, Eq. \eqref{eq-q2} implies
the inclusion $[-\alpha^2,+\infty) \subset\spece Q^\Omega$, and the combination with \eqref{eq-q1}
gives the required equality.
\end{proof}

\section{Conical domains with a finite discrete spectrum}

We are going to describe first a class of conical domains whose discrete spectrum
is (at most) finite. We will start with the following assertion which is
essentially an adaptation of the case considered by Exner and Minakov
\cite[Theorem 5.3]{em} in two dimensions.

\begin{theorem}\label{thm2a}
Let $\nu \ge 2$ and $\Omega\subset\RR^\nu$ be an open set with a piecewise $C^2$
boundary such that the complement $\RR^\nu\setminus\Omega$ is convex, then
\[
\spec Q^\Omega_\alpha\subset[-\alpha^2,+\infty) \text{ for any } \alpha>0.
\]
\end{theorem}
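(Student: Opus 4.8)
The plan is to prove the equivalent assertion that the quadratic form is bounded below by $-\alpha^2$, i.e.
\[
q^\Omega_\alpha(u,u)\ge -\alpha^2\|u\|^2_{L^2(\Omega)}\quad\text{for all }u\in H^1(\Omega).
\]
Since $q^\Omega_\alpha$ is closed on $\cD(q^\Omega_\alpha)=H^1(\Omega)$, such a lower bound for the form immediately gives $\inf\spec Q^\Omega_\alpha\ge-\alpha^2$ and hence the inclusion $\spec Q^\Omega_\alpha\subset[-\alpha^2,+\infty)$. Recalling the definition of $q^\Omega_\alpha$, the claim is equivalent to the trace inequality
\[
\alpha\int_{\partial\Omega}|u|^2\,\dd\sigma\le \int_\Omega|\nabla u|^2\,\dd x+\alpha^2\int_\Omega|u|^2\,\dd x,
\]
which I would establish by a Gauss--Green argument. (The half-space $\Omega=\{x_\nu>0\}$, for which the bound is saturated, indicates that the convexity of the complement is exactly the mechanism preventing the form from going below $-\alpha^2$.)

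The geometric input is the distance function $\rho(x):=\mathrm{dist}\big(x,\RR^\nu\setminus\Omega\big)$. Writing $K:=\RR^\nu\setminus\Omega$, the convexity of $K$ forces $\rho$ to be a \emph{convex} function on $\RR^\nu$; consequently its distributional Laplacian $\Delta\rho$ is a nonnegative Radon measure. Moreover, the metric projection onto a closed convex set is single-valued, so $\rho$ is differentiable with $|\nabla\rho|\equiv1$ throughout the open set $\Omega$, and the trace of $\nabla\rho$ on $\partial\Omega$ equals the \emph{inner} unit normal, that is $\nabla\rho\cdot n=-1$ $\sigma$-a.e., where $n$ is the outer unit normal of $\Omega$.

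The key computation uses the vector field $F:=|u|^2\,\nabla\rho$. For $u$ smooth and compactly supported, the divergence theorem together with $\mathrm{div}\,F=|u|^2\,\Delta\rho+2\,\mathrm{Re}(\bar u\,\nabla u)\cdot\nabla\rho$ yields
\[
-\int_{\partial\Omega}|u|^2\,\dd\sigma=\int_{\partial\Omega}|u|^2\,(\nabla\rho\cdot n)\,\dd\sigma=\int_\Omega|u|^2\,\dd(\Delta\rho)+2\int_\Omega \mathrm{Re}(\bar u\,\nabla u)\cdot\nabla\rho\,\dd x.
\]
Discarding the nonnegative term $\int_\Omega|u|^2\,\dd(\Delta\rho)\ge0$, using $|\nabla\rho|=1$, and applying Young's inequality $2|u|\,|\nabla u|\le\alpha|u|^2+\alpha^{-1}|\nabla u|^2$, I obtain precisely the trace inequality displayed above. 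To reach all of $H^1(\Omega)$ I would then invoke density: restrictions of functions in $C^\infty_c(\RR^\nu)$ are dense in $H^1(\Omega)$ for the Lipschitz domain $\Omega$, and both sides of the inequality are continuous in the $H^1$-norm, the boundary term through the continuity of the trace map $H^1(\Omega)\to L^2(\partial\Omega)$.

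The main obstacle is that $\rho$ is only $C^{1,1}$ (not $C^2$) away from $K$, so $\Delta\rho$ is genuinely a measure rather than a function, and $\partial\Omega$ is merely piecewise $C^2$ and unbounded. I would address this either by appealing to a Gauss--Green formula valid for Lipschitz vector fields on Lipschitz domains, which covers the piecewise $C^2$ boundary directly, or by approximating $K$ from outside by smooth convex bodies $K_\varepsilon\supset K$ (for instance outer parallel bodies), carrying out the smooth computation on $\Omega_\varepsilon=\RR^\nu\setminus K_\varepsilon$, and passing to the limit $\varepsilon\to0$. The nonnegativity of $\Delta\rho$ is robust under either procedure, so no sign is lost in the limit; the unboundedness of $\Omega$ is harmless because the density argument reduces everything to compactly supported $u$.
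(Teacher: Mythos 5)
Your argument is correct, and it takes a genuinely different route from the paper's. The paper works in parallel (normal) coordinates: it maps $S_0\times\RR_+$ into $\Omega$ by $\Phi(s,t)=s-tn(s)$ -- injectivity of $\Phi$ and the inclusion of its image in $\Omega$ being exactly where convexity of $\RR^\nu\setminus\Omega$ enters -- rewrites the form in these coordinates with Jacobian $J(s,t)=\prod_j\big(1-tk_j(s)\big)\ge 1$ (all principal curvatures $k_j$ are nonpositive by convexity), discards the tangential part of the gradient, and then applies fiberwise the one-dimensional inequality $\int_{\RR_+}v'(t)^2\,\dd t-\alpha v(0)^2+\alpha^2\int_{\RR_+}v(t)^2\,\dd t\ge 0$. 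You replace all of this geometric bookkeeping by the distance function $\rho$ to the convex complement, with convexity entering through $\Delta\rho\ge 0$ (as a measure) and the eikonal identity $|\nabla\rho|=1$, followed by Gauss--Green and Young's inequality. The two proofs share the same analytic core -- the paper's one-dimensional inequality is itself proved by writing $v(0)^2=-2\int_{\RR_+} vv'\,\dd t$ and applying Young, which is exactly your final step -- but your version avoids the tubular change of variables and any curvature computation, and consequently needs less boundary smoothness (no second fundamental form, only the $\sigma$-a.e.\ defined normal). The price is the regularity issue for $\rho$, which you correctly identify; both of your proposed fixes are workable, and for the outer-parallel-body approximation you should add the one missing ingredient: the weak convergence of the surface measures of $\partial K_\varepsilon$ to that of $\partial K$, which holds by continuity of the area measure of convex bodies under Hausdorff convergence. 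One advantage of the paper's formulation worth noting is that its coordinate computation is reused verbatim in the proof of Theorem~\ref{thm2c}, where only the complement of the asymptotic cone (not of $\Omega$ itself) is convex and the estimate is applied on the exterior region alone; your distance-function argument would need a similar localization there.
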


\begin{proof}
Let $n$ be the outer unit normal, which is defined at least on the smooth
part $S_0$ of the boundary of $\Omega$. Consider the set $\Theta:=\Phi(S_0\times\RR_+)$ with
\[
\Phi(s,t)=s-tn(s), \quad s\in S_0, \quad t\in \RR_+.
\]
Due to the convexity of $\RR^\nu\setminus \Omega$ we have the inclusion $\Theta\subset \Omega$,
and the map $\Phi$ is a bijection between $S_0\times\RR_+$ and $\Theta$.
Furthermore, passing to the coordinates $(s,t)\in S_0\times\RR_+$ one obtains, for any $u\in C^\infty_c(\RR^\nu)$,
\begin{multline*}
\int_\Omega |\nabla u|^2\dd x-\alpha \int_{\partial\Omega}u^2\dd\sigma
+\alpha^2 \int_\Omega u^2\dd x\\
=\int_\Omega |\nabla u|^2\dd x-\alpha \int_{S_0}u^2\dd\sigma
+\alpha^2 \int_\Omega u^2\dd x\\
 \ge \int_\Theta |\nabla u|^2\dd x-\alpha \int_{S_0}u^2\dd\sigma
+\alpha^2 \int_\Theta u^2\dd x\\
=\int_{S_0} \int_{\RR_+} \big|(\nabla u)\big(\Phi (s,t)\big) \big|^2 J(s,t)  \dd t \,\dd \sigma(s)
-\alpha \int_{S_0}u \big(\Phi(s,0)\big)^2\dd\sigma\\
+\alpha^2 \int_{S_0} \int_{\RR_+} u\big(\Phi (s,t)\big)^2 J(s,t)  \dd t \,\dd \sigma(s)=:A,
\end{multline*}
where $J(s,t)=\big(1- t k_1(s)\big)\cdot\ldots\cdot \big(1- t k_{\nu-1}(s)\big)$
with $k_j$ being the principal curvatures of the boundary $\partial\Omega$.
Due to the convexity of $\RR^\nu\setminus\Omega$
we have $k_j\le 0$ and, subsequently, $J\ge 1$, which gives
\begin{align*}
A\ge &\int_{S_0} \int_{\RR_+} \big|(\nabla u)\big(\Phi (s,t)\big) \big|^2 \dd t \,\dd \sigma(s)
-\alpha \int_{S_0}u \big(\Phi(s,0)\big)^2\dd\sigma(s)\\
&\quad +\alpha^2 \int_{S_0} \int_{\RR_+} u\big(\Phi (s,t)\big)^2 \dd t \,\dd \sigma(s)\\
&\ge
\int_{S_0} \int_{\RR_+} \Big|\big\langle n(s),(\nabla u)\big(\Phi (s,t)\big) \big\rangle\Big|^2 \dd t \,\dd \sigma(s)
-\alpha \int_{S_0}u \big(\Phi(s,0)\big)^2\dd\sigma(s)\\
&\quad +\alpha^2 \int_{S_0} \int_{\RR_+} u\big(\Phi (s,t)\big)^2 \dd t \,\dd \sigma(s)\\
&=
\int_{S_0} \int_{\RR_+} \Big|\dfrac{\partial}{\partial t}\,u\big(\Phi (s,t)\big) \Big|^2 \dd t \dd \sigma(s)
-\alpha \int_{S_0}u \big(\Phi(s,0)\big)^2\dd\sigma\\
&\quad +\alpha^2 \int_{S_0} \int_{\RR_+} u\big(\Phi (s,t)\big)^2 \dd t \dd \sigma(s)\\
&=\int_{S_0} \bigg(
\int_{\RR_+} \Big|\dfrac{\partial}{\partial t}\,u\big(\Phi (s,t)\big) \Big|^2 \dd t
-\alpha u \big(\Phi(s,0)\big)^2 + \alpha^2 \int_{\RR_+} u\big(\Phi (s,t)\big)^2 \dd t
\bigg) \dd\sigma(s).
\end{align*}
As for each $v\in H^1(\RR_+)$ we have
\[
\int_{\RR_+} v'(t)^2\dd t -\alpha v(0)^2 + \alpha^2 \int_{\RR_+} v(t)^2 \dd t \ge 0,
\]
we arrive at $A\ge 0$ and
\[
\int_\Omega |\nabla u|^2\dd x-\alpha \int_{\partial\Omega}u^2\dd\sigma
+\alpha^2 \int_\Omega u^2\dd x\ge 0, \quad u \in C^\infty_c(\RR^\nu).
\]
By density, this extends to all $u\in H^1(\Omega)$ and gives $Q^\Omega_\alpha\ge-\alpha^2$.
\end{proof}

By combining Theorems~\ref{thm1a} and \ref{thm2a} we obtain the following result:
\begin{cor}\label{cor1}
Let $\nu\ge 2$ and $\Omega\subset\RR^\nu$ be a conical domain smooth at infinity
and such that $\RR^\nu\setminus\Omega$ is convex, then
$\spec Q^\Omega_\alpha=[-\alpha^2,+\infty)$ for any $\alpha>0$.
In particular, $Q^\Omega_\alpha$ has no discrete spectrum for any $\alpha>0$.
\end{cor}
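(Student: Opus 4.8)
The plan is to obtain this statement as an immediate combination of Theorems~\ref{thm1a} and \ref{thm2a}, since the hypotheses of the corollary are precisely designed to activate both results simultaneously. First I would check that $\Omega$ satisfies the assumptions of each theorem separately: by hypothesis $\Omega$ is a conical domain smooth at infinity, which is exactly what Theorem~\ref{thm1a} requires; and $\RR^\nu\setminus\Omega$ is convex, which, together with the smoothness at infinity (the boundary being $C^2$ outside a ball and, as the boundary of a convex complement, at least piecewise regular inside), supplies the piecewise $C^2$ boundary and the convexity of the complement demanded by Theorem~\ref{thm2a}.

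Granting this, the two theorems give the two needed inclusions. Theorem~\ref{thm2a} yields $Q^\Omega_\alpha\ge-\alpha^2$, that is $\spec Q^\Omega_\alpha\subset[-\alpha^2,+\infty)$, so no spectrum lies strictly below $-\alpha^2$. Theorem~\ref{thm1a} identifies the bottom part of the spectrum as essential spectrum: $\spece Q^\Omega_\alpha=[-\alpha^2,+\infty)$. Since the essential spectrum is always contained in the spectrum, I would then chain the inclusions $[-\alpha^2,+\infty)=\spece Q^\Omega_\alpha\subset\spec Q^\Omega_\alpha\subset[-\alpha^2,+\infty)$, which forces $\spec Q^\Omega_\alpha=[-\alpha^2,+\infty)$ for every $\alpha>0$. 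The final assertion about the discrete spectrum follows at once, because $\specd Q^\Omega_\alpha=\spec Q^\Omega_\alpha\setminus\spece Q^\Omega_\alpha=\emptyset$.

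Since all the analytic content is already carried by the two theorems, no genuinely hard step remains; the whole argument is a two-line set-theoretic combination of a spectral inclusion from below with an identification of the essential spectrum. The only point requiring a moment of care—and thus the sole (minor) obstacle—is verifying that the convexity of $\RR^\nu\setminus\Omega$ together with smoothness at infinity really does provide the boundary regularity invoked by Theorem~\ref{thm2a}, so that the semiboundedness estimate $Q^\Omega_\alpha\ge-\alpha^2$ is legitimately available for this class of domains.
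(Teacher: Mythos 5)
Your proposal is correct and is exactly the paper's argument: the corollary is stated there as an immediate combination of Theorems~\ref{thm1a} and \ref{thm2a}, with the same chain of inclusions $[-\alpha^2,+\infty)=\spece Q^\Omega_\alpha\subset\spec Q^\Omega_\alpha\subset[-\alpha^2,+\infty)$ and the consequent emptiness of the discrete spectrum. Your closing caveat about boundary regularity is a reasonable point of care, but it is not treated as an obstacle in the paper either.
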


A slight modification of the above discussion gives a sufficient condition
for the finiteness of the discrete spectrum in more general situations.

\begin{theorem}\label{thm2c}
Let $\nu\ge 2$ and $\Omega\subset\RR^\nu$ be a conical domain smooth at infinity
and $\Lambda=\Lambda(\Omega)$ be the associated cone.
Assume that the complement $\RR^\nu\setminus \Lambda$ is
convex, then the discrete spectrum of $Q^\Omega_\alpha$
is finite for any $\alpha>0$.
\end{theorem}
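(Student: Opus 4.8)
The plan is to combine Theorem~\ref{thm1a}, which gives $\spece Q^\Omega_\alpha=[-\alpha^2,+\infty)$, with a decoupling (Neumann bracketing) argument isolating the compact region where $\Omega$ differs from $\Lambda$. Since the discrete spectrum of $Q^\Omega_\alpha$ lies entirely in $(-\infty,-\alpha^2)$, it suffices by the min-max principle to show that only finitely many of the values $E_N(Q^\Omega_\alpha)$ lie below $-\alpha^2$. Let $V$ be the vertex of $\Lambda$ and choose $R>0$ so large that $\Omega$ coincides with $\Lambda$ on $\{|x-V|>R\}$; set $B=\{|x-V|<R\}$, $\Omega_{\mathrm{in}}=\Omega\cap B$ and $\Omega_{\mathrm{out}}=\Lambda\setminus\overline B$. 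Imposing an additional Neumann condition on the interface $\Omega\cap\partial B$ produces the decoupled operator $\widehat Q=Q_{\mathrm{in}}\oplus Q_{\mathrm{out}}$ in $L^2(\Omega_{\mathrm{in}})\oplus L^2(\Omega_{\mathrm{out}})=L^2(\Omega)$, each summand carrying the original Robin condition on the part of its boundary lying in $\partial\Omega$. As the form domain of $\widehat Q$ contains $H^1(\Omega)$ and the forms agree there, the min-max principle yields $E_N(\widehat Q)\le E_N(Q^\Omega_\alpha)$ for every $N$.

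I would then treat the two pieces separately. The inner domain $\Omega_{\mathrm{in}}$ is bounded and Lipschitz (the radial boundary $\partial\Lambda$ meets the sphere $\partial B$ transversally), so $H^1(\Omega_{\mathrm{in}})\hookrightarrow L^2(\Omega_{\mathrm{in}})$ is compact, $Q_{\mathrm{in}}$ has compact resolvent, and in particular only finitely many of its eigenvalues, say $m$, lie below $-\alpha^2$. For the outer domain I claim $Q_{\mathrm{out}}\ge-\alpha^2$, i.e. all values $E_N(Q_{\mathrm{out}})$ are $\ge-\alpha^2$. Granting this, the ordered min-max values of $\widehat Q$ form the sorted union of those of $Q_{\mathrm{in}}$ and $Q_{\mathrm{out}}$, so exactly $m$ of them lie below $-\alpha^2$; hence $E_N(\widehat Q)\ge-\alpha^2$ for $N>m$, and therefore $E_N(Q^\Omega_\alpha)\ge-\alpha^2=\inf\spece Q^\Omega_\alpha$ for $N>m$. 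By the min-max dichotomy $Q^\Omega_\alpha$ has at most $m$ eigenvalues below $-\alpha^2$, which is the asserted finiteness.

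It remains to prove $Q_{\mathrm{out}}\ge-\alpha^2$, and this is where the argument of Theorem~\ref{thm2a} must be adapted to the truncated cone; it is also the main obstacle. Let $\Gamma_1=\partial\Lambda\cap\{|x-V|>R\}$ be the Robin part of $\partial\Omega_{\mathrm{out}}$, which is $C^2$ since the vertex lies inside $B$, and consider $\Phi(s,t)=s-tn(s)$ for $s\in\Gamma_1$, $t\ge0$, with $n$ the outer unit normal. Convexity of $\RR^\nu\setminus\Lambda$ again makes $\Phi$ injective with Jacobian $J\ge1$. The essential new point is that $\Theta:=\Phi(\Gamma_1\times\RR_+)$ is contained in $\Omega_{\mathrm{out}}$, not merely in $\Lambda$: since $\Lambda$ is a cone, each generating ray $\{V+t(s-V):t>0\}$ lies in $\partial\Lambda$, so $s-V$ is tangent to $\partial\Lambda$ at $s$ and $\langle n(s),s-V\rangle=0$; consequently
\[
|\Phi(s,t)-V|^2=|s-V|^2-2t\langle n(s),s-V\rangle+t^2=|s-V|^2+t^2>R^2,
\]
so $\Phi(s,t)\in\{|x-V|>R\}\cap\Lambda=\Omega_{\mathrm{out}}$.

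With $\Theta\subset\Omega_{\mathrm{out}}$ established, I would run the computation of Theorem~\ref{thm2a} verbatim: for $u\in C^\infty_c(\RR^\nu)$ one discards the nonnegative contribution of $\Omega_{\mathrm{out}}\setminus\Theta$, changes variables via $\Phi$ on $\Theta$, uses $J\ge1$ together with $|\langle n(s),(\nabla u)(\Phi(s,t))\rangle|^2\le|(\nabla u)(\Phi(s,t))|^2$, and finally applies the one-dimensional inequality $\int_{\RR_+}v'(t)^2\,\dd t-\alpha v(0)^2+\alpha^2\int_{\RR_+}v(t)^2\,\dd t\ge0$ along each normal fibre. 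The artificial Neumann part $\Lambda\cap\partial B$ of $\partial\Omega_{\mathrm{out}}$ carries no boundary term and hence does not enter the estimate. This gives $\int_{\Omega_{\mathrm{out}}}|\nabla u|^2-\alpha\int_{\Gamma_1}u^2+\alpha^2\int_{\Omega_{\mathrm{out}}}u^2\ge0$, and by density $Q_{\mathrm{out}}\ge-\alpha^2$, completing the proof. The crux of the whole argument is thus the geometric containment $\Theta\subset\Omega_{\mathrm{out}}$, which hinges on the orthogonality $\langle n(s),s-V\rangle=0$ characteristic of cones.
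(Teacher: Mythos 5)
Your proposal is correct and follows essentially the same route as the paper: Neumann bracketing on a sphere separating the perturbed region from the exact cone, compactness of the resolvent on the bounded inner piece, and the bound $Q_{\mathrm{out}}\ge-\alpha^2$ obtained by rerunning the proof of Theorem~\ref{thm2a} on the outer piece, with the key observation $\langle n(s),s-V\rangle=0$ ensuring $\Phi(s,t)$ stays outside the ball. This orthogonality argument is exactly the one used in the paper's proof, so there is nothing to add.
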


\begin{proof} 
For $r>0$, denote $B_r:=\{x\in\RR^\nu: |x|<r\}$.
Assume that the origin is a vertex of $\Lambda$
and choose $R>0$ such that $\Omega$ coincides with $\Lambda$
outside $B_{R-1}$. Denote $\Omega_i:=B_R\mathop{\cap} \Omega$ and
$\Omega_e:=\Omega\setminus \overline{\Omega_i}$ and consider the
quadratic form
\begin{align*}
\widetilde q(u,u)&:=\int_{\Omega_i} |\nabla u|^2 \dd x -\alpha \int_{\partial \Omega_i\cap \partial \Omega} u^2 \dd\sigma\\
&\quad +\int_{\Omega_e} |\nabla u|^2 \dd x -\alpha \int_{\partial \Omega_e\cap \partial \Omega} u^2 \dd\sigma,
\quad u \in H^1(\Omega_i\mathop{\cup}\Omega_e),
\end{align*}
which generates a self-adjoint operator $\widetilde Q$ in $L^2(\Omega)$.
As $\widetilde q$ extends $q^\Omega_\alpha$, due to the min-max principle
it is sufficient to show that $\widetilde Q$ has finitely many eigenvalues
in $(-\infty,-\alpha^2)$.
Remark that  $\widetilde Q=Q_i\oplus Q_e$, where
$Q_i$ and $Q_e$ are the self-adjoint operators acting respectively in $L^2(\Omega_i)$
and $L^2(\Omega_e)$ and generated by the quadratic forms
\begin{align*}
q_i(u,u)&:=\int_{\Omega_i} |\nabla u|^2 \dd x -\alpha \int_{\partial \Omega_i\cap \partial \Omega} u^2 \dd\sigma, \quad u \in H^1(\Omega_i),\\
q_e(u,u)&:=\int_{\Omega_e} |\nabla u|^2 \dd x -\alpha \int_{\partial \Omega_e\cap \partial \Omega} u^2 \dd\sigma,
\quad u \in H^1(\Omega_e)
\end{align*}
respectively.
Denote $S_e:=\partial \Omega_e\cap \partial\Omega\subset\partial\Lambda\mathop{\cap}\partial\Omega$
and consider the map
\[
S_e\times \RR_+ \ni (s,t)\mapsto \Phi(s,t):=s- t n(s),
\]
where $n$ is the outer unit normal.
Due to the convexity of $\RR^\nu\setminus\Lambda$ the map $\Phi$ is injective, and $\Phi(s,t)\in \Lambda$
for any admissible $(s,t)$. On the other hand, as $S_e\subset \partial \Lambda$,
we have $s\mathop{\bot} n(s)$ and, hence, $\big|\Phi(s,t)\big|\ge |s|> R$, i.e.
$\Phi(s,t)\in \RR^\nu\setminus \overline{B_R}$. Therefore, $\Phi(S_e\times\RR_+)\subset \Lambda\setminus \overline{B_R}\equiv  \Omega_e$.
Proceeding as in the proof of Theorem~\ref{thm2a} we obtain $Q_e\ge -\alpha^2$, which means that
the spectrum of $\widetilde Q$ in $(-\infty,-\alpha^2)$
coincides with that of $Q_i$. On the other hand $Q_i$ is semibounded from below
and  has a compact resolvent and, hence, can have only finitely many eigenvalues
in this interval, which gives the sought result.
\end{proof}

\section{Conical domains with an infinite discrete spectrum}

Now we are going to go in the opposite direction and to show that
Robin laplacians on a large class of conical domains have an infinite discrete spectrum.

\begin{lemma}\label{eq-lemma}
Let $\nu\ge 3$ and $\Lambda\in \RR^\nu$ be a smooth cone with a vertex at the origin.
Assume that there exists a point on $\partial\Lambda\setminus\{0\}$ at which the mean curvature
is strictly positive. Then for any $\alpha>0$, $r_0>0$, $N\in \NN$ there exist $N$ functions
$F_j \in H^1(\Lambda)\setminus\{0\}$, $j=1,\dots, N$, satisfying the following conditions:
\begin{itemize}
\item[(a)] $F_j(x)=0$ for $|x|\le r_0$ and $j=1,\dots, N$,
\item[(b)] the functions $F_j$ have mutually disjoint supports,
\item[(c)] for any $j=1,\dots, N$ there holds
\begin{equation}
    \label{eq-minmin}
		q^\Lambda_\alpha(F_j,F_j)\le -\alpha^2 \|F_j\|^2_{L^2(\Lambda)}.
\end{equation}
\end{itemize}
\end{lemma}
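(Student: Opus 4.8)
The plan is to use the min\nobreakdash-max threshold $-\alpha^2=\inf\spece Q^\Lambda_\alpha$ from Theorem~\ref{thm1a} and to beat it, at a sequence of well\nobreakdash-separated distances from the vertex, by localized trial functions whose transverse profile near the boundary is $e^{-\alpha t}$ and whose only negative contribution comes from the positive mean curvature. First I would normalise the situation: if $p\in\partial\Lambda\setminus\{0\}$ has $H(p)>0$, then $s_0:=p/|p|\in\partial\Lambda\cap\SSS^{\nu-1}$ satisfies $H(s_0)=|p|\,H(p)=:H_0>0$, so we may assume $|s_0|=1$. Let $W$ be an open neighbourhood of $s_0$ in $\partial\Lambda\cap\SSS^{\nu-1}$ on which $H\ge H_0/2$. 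Since $\Lambda$ is a cone, the dilation $x\mapsto Rx$ maps $\partial\Lambda$ onto itself and divides every principal curvature by $R$; hence $H(Rs_0)=H_0/R$ and $H\ge H_0/(2R)$ on the dilated patch $RW:=\{Rw:w\in W\}\subset\partial\Lambda$, whose tangential size is of order $R$. The functions $F_j$ will be supported in thin neighbourhoods of the points $R_js_0$ for a rapidly increasing sequence $R_j\to\infty$, which will yield (a) and (b) automatically.

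For a single radius $R$ I would work in the tubular coordinates $\Phi(s,t)=s-tn(s)$ of the proof of Theorem~\ref{thm2a}, in which the volume element is $J(s,t)\,\dd t\,\dd\sigma(s)$ with $J(s,t)=\prod_{i=1}^{\nu-1}\bigl(1-tk_i(s)\bigr)$ and $\partial_t J(s,0)=-(\nu-1)H(s)$, and take the product trial function $u(s,t)=\chi(s)\,e^{-\alpha t}$, where $\chi$ is a tangential cutoff supported in $RW$ on a scale $L$ and an (exponentially negligible) cutoff is inserted in $t$ at a width $T$ with $1/\alpha\ll T\ll R$. Writing $v(t)=e^{-\alpha t}$ and using the identity $\int_{\RR_+}\bigl(v'^2+\alpha^2v^2\bigr)\dd t-\alpha v(0)^2=0$ together with the first\nobreakdash-order expansion of $J$, the transverse integral contributes, for each fixed $s$,
\[
\int_{\RR_+}\bigl(v'^2+\alpha^2v^2\bigr)J\,\dd t-\alpha v(0)^2
=-\tfrac12(\nu-1)H(s)+\cO\!\left(\tfrac{1}{\alpha R^2}\right),
\]
while the tangential gradient of $\chi$ produces a nonnegative cost of order $L^{\nu-3}/\alpha$. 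Integrating the transverse gain over $\supp\chi$, where $H\ge H_0/(2R)$ and $\chi\approx 1$, yields a contribution of order $-H_0L^{\nu-1}/R$, so that
\[
q^\Lambda_\alpha(u,u)+\alpha^2\|u\|^2_{L^2(\Lambda)}\le -c_1\,\frac{H_0}{R}\,L^{\nu-1}+c_2\,\frac{L^{\nu-3}}{\alpha},
\]
with $c_1,c_2>0$ independent of $R$ and $L$. Choosing $L=\sqrt{2c_2R/(c_1H_0\alpha)}$, which satisfies $1/\alpha\ll L\ll R$ for all large $R$, makes the right\nobreakdash-hand side nonpositive, i.e.\ \eqref{eq-minmin} holds.

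It then remains to iterate. I would pick $R_1$ so large that $R_1>r_0$ and the above estimate is valid, and set $R_{j+1}=2R_j$. Each $F_j$ is the trial function built at radius $R_j$ with tangential scale $L_j\sim\sqrt{R_j/\alpha}$ and transverse width $T_j$; its support lies in a set of diameter $\cO(L_j)=\cO(\sqrt{R_j/\alpha})=o(R_j)$ around $R_js_0$. Hence for large $R_1$ these supports are pairwise disjoint and avoid $\overline{B_{r_0}}$, delivering (a), (b) and (c) simultaneously.

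The hard part will be the uniform control of the error terms in the transverse computation: one must verify that the higher\nobreakdash-order part of the Jacobian $J$, the distortion of the tangential metric over the tube of width $T$, and the $t$\nobreakdash-cutoff all contribute at strictly lower order than the curvature gain $-H_0L^{\nu-1}/R$ as $R\to\infty$. This is precisely the regime where the curvature scales like $1/R$ while the tube stays thin ($T\ll R$), so that $tk_i=\cO(T/R)\to 0$ uniformly; the strong\nobreakdash-coupling reduction of~\cite{pp15} confirms that $e^{-\alpha t}$ is the correct transverse profile and that these remainders are controlled, and the balance $L\sim\sqrt{R/\alpha}$ is exactly what lets the small curvature gain overcome the tangential localisation cost.
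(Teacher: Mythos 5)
Your proposal is correct in substance and relies on the same underlying mechanism as the paper --- boundary-layer trial functions $\chi(s)e^{-\alpha t}$ whose flat one-dimensional Robin energy is exactly the threshold, so that the first-order Jacobian term $-t(\nu-1)H(s)$ yields a curvature gain of order $H_0L^{\nu-1}/R$ beating the tangential cutoff cost of order $L^{\nu-3}/\alpha$ once $L\sim\sqrt{R/\alpha}$ --- but you implement it by a genuinely different route. The paper never works at fixed $\alpha$ and large distance: it uses the unitary dilation identity $R^2 Q^\Lambda_\alpha=U_R^*Q^\Lambda_{R\alpha}U_R$ to pass to the strong-coupling regime, smoothens the vertex to obtain a $C^2$ domain $\Theta$, and then invokes \cite[Lemma 4.1]{pp15} as a black box; that lemma packages exactly the tube technicalities you flag as the hard part (higher-order Jacobian remainders, metric distortion across the layer, the Dirichlet cutoff in $t$). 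Disjointness of supports is then arranged tangentially, via $N$ bumps $\phi_j(\cdot/\sqrt R)$ with disjoint supports at a single large scale $R$, rather than radially at dyadic radii $R_j=2^jR_1$ as in your construction. Your version is more self-contained in spirit: no rescaling, no smoothing of the vertex (your supports never approach it), and the dyadic-radii disjointness is clean and gives (a) and (b) for free.

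Two caveats. First, your closing appeal to \cite{pp15} to ``confirm that the remainders are controlled'' is mildly circular as stated: the estimates there are formulated for a fixed $C^2$ domain with coupling tending to $+\infty$, so to quote them literally you must first perform precisely the rescaling that constitutes the paper's proof; otherwise you must carry out the tube estimates yourself. Your sketch does identify the correct orders (the per-area Jacobian error $\cO(1/(\alpha R^2))$ is indeed negligible against the gain $H_0/R$, and an exponential $t$-cutoff at width $T$ with $\log R\ll \alpha T\ll \alpha R$ is harmless), but a complete write-up must also verify that $\Phi(s,t)=s-tn(s)$ is injective on the patch times $(0,T)$ with image in $\Lambda$; by homogeneity of the cone and compactness of $\overline W$ the normal cut distance at $|s|\sim R$ is bounded below by $cR\gg T$, so this is true, but it needs to be said. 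Second, the multiplicative $(1+\cO(T/R))$ corrections to the tangential metric and to $\|u\|^2_{L^2}$ should be absorbed explicitly into your constants $c_1,c_2$; they are $1+o(1)$ factors and harmless, but they are where uniformity in $R$ is checked. With these points filled in, your argument is a valid alternative proof.
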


\begin{proof}
We remark first that if the mean curvature $H$ is positive at some point $P\in \partial\Lambda\setminus\{0\}$, then the same holds
on the whole half-line $\RR_+ P\subset \partial\Lambda$.

For $R>0$, introduce a unitary transform $U_R:L^2(\Lambda)\to L^2(\Lambda)$
by $(U_R f)(x)=R^{\nu/2} f(Rx)$. It is easy to check the identity
$R^2 Q^\Lambda_\alpha= U_R^* Q^\Lambda_{R\alpha} U_R$.
In particular, for $F\in H^1(\Lambda)$ we have
\begin{equation}
     \label{eq-FF}
\dfrac{q^\Lambda_\alpha(F,F)}{\|F\|^2_{L^2(\Lambda)}}=\dfrac{1}{R^2}\dfrac{q^\Lambda_{R\alpha}(U_R F,U_R F)}{\|U_R F\|^2_{L^2(\Lambda)}}.
\end{equation}
We will look for functions $F_j$ with the above properties using the representation $U_R F_j=f_j$, i.e.
$F_j=U^*_R f_j=U_{1/R} f_j$, where $R$ is a large parameter and
$f_j$ are new functions to be chosen in a suitable way.

For $r>0$, denote $B_r:=\{x\in\RR^\nu: |x|<r\}$. Let $b>2$. 
By smoothening the vertex of $\Lambda$ one can construct a connected $C^2$ smooth domain
$\Theta$ coinciding with $\Lambda$ outside the ball $B_{b-1}$.
Clearly, for any function $f\in H^1(\Lambda)$ vanishing in $\Lambda\mathop{\cap}B_b$
we have
\begin{equation}
   \label{eq-RR}
\dfrac{q^\Lambda_{R\alpha}(f,f)}{\|f\|^2_{L^2(\Lambda)}}=\dfrac{q^\Theta_{R\alpha}(\widetilde f,\widetilde f\,)}{\|\widetilde f\|^2_{L^2(\Theta)}},
\end{equation}
where $\widetilde f = f$ on $\Theta\setminus B_{b-1}$ and $\widetilde f=0$ on the remaining part of $\Theta$.
Now we construct a function $\widetilde f$ with the above property in a special way using some computations of~\cite{pp15}.

Denote $S:=\partial\Theta$ and let $S\ni s\mapsto n(s)$ be the outer unit normal.
Furthermore, let $\delta:=\delta(R)$ be a positive function such that $\delta(R) \to 0$ and $R\delta(R)\to +\infty$ as
$R\to +\infty$. Denote by $E$ and $\psi$ respectively the first eigenvalue and the associated normalized
eigenfunction of the operator $\phi\mapsto -\phi''$ in $L^2(0,\delta)$ with the boundary conditions
$\phi'(0)+R\alpha \phi(0)=0$ and $\phi(\delta)=0$. We recall that one has the estimate $-\alpha^2R^2\le E\le -\alpha^2R^2+\cO(\alpha^2R^2e^{-\delta R \alpha})$
as $R\to+\infty$, see \cite[Lemma~2.1]{pp15}.

Consider the map $S\times(0,\delta)\ni(s,t)\mapsto \Phi(s,t):=s-tn(s)\in \RR^\nu$.
If $R$ is sufficiently large, then $\Phi$ is a diffeomorphism between $S\times(0,\delta)$ and $\Phi\big(S\times(0,\delta)\big)$
and, moreover, $\Phi\big(S\times(0,\delta)\big)\subset \Theta$.
For $v\in H^1(S)$, consider the associated functions $\widetilde f=\widetilde f_v\in H^1(\Theta)$ given by
\begin{equation}
    \label{eq-fv}
\widetilde f (x)\equiv \widetilde f_v(x):=\begin{cases} v(s)\psi(t), & x=\Phi(s,t), \quad (s,t)\in S\times(0,\delta),\\
0, & x\in \Theta \setminus \Phi\big(S\times(0,\delta)\big).
\end{cases}
\end{equation}
By \cite[Lemma 4.1]{pp15}, one can find constants $c'_1,c'_2,c'_3$ and $R_0$ such that
for all $v\in H^1(S)$ and $R\ge R_0$ there holds
\begin{multline*}
\dfrac{q^\Theta_{R\alpha}(\widetilde f,\widetilde f)}{\|\widetilde f\|^2_{L^2(\Theta)}}-E\\
\le (1+c'_1 \delta) \dfrac{(1+c'_2\delta) \displaystyle \int_S g^{\rho \mu} \partial_\rho v \,\partial_\mu v \,\dd \sigma -R\alpha (\nu-1)\int_{S} H v^2 \,\dd \sigma}{\|v\|^2_{L^2(S,\dd\sigma)}}\\
+c'_3(1+R\alpha e^{-R\alpha \delta}),
\end{multline*}
where $(g^{\rho\mu})$ is the contravariant metric tensor on $S$ induced by the embedding into $\RR^\nu$
and Einstein convention for indices is used.
In order to simplify the subsequent computations we take $\delta:=R^{-1/2}$, then there are positive constants $b_1$, $b_2$, $b_3$ and $R_1$ such that
for all $v\in H^1(S)$ and $R\ge R_1$ there holds
\begin{equation}
\dfrac{q^\Theta_{R\alpha}(\widetilde f,\widetilde f)}{\|\widetilde f\|^2_{L^2(\Theta)}}
\le -\alpha^2 R^2 + \dfrac{b_1\displaystyle \int_S g^{\rho \mu} \partial_\rho v \,\partial_\mu v \,\dd \sigma - b_2\alpha R \int_{S} H v^2 \dd \sigma}{\|v\|^2_{L^2(S,\dd\sigma)}}
+b_3.
\end{equation}
Choose a point $M\in \partial\Sigma_\Lambda$ with $H(M)=\max_{s\in \partial\Sigma_\Lambda} H(s)$.
By assumption, $H(M)>0$. By applying a suitable rotation we may assume that  $M$ has the coordinates $(1,0,\dots, 0)$
and that $n(M)=(0,\dots, 0, -1)$. It follows that the hyperplane $x_\nu=0$ is tangent to $\Lambda$ at $M$, and
there exists a $C^2$ function $h$ with $h(0,\dots,0)=0$ and $\nabla h(0,\dots,0)=0$ and some $\varepsilon>0$ and $C>0$
such that
\begin{multline*}
\Lambda \cap \{x_1=1\}\cap \bigcap_{k=2}^{\nu-1} \big\{|x_k|< \varepsilon\big\} \cap \big\{|x_\nu|< C\varepsilon\big\}\\
=\big\{x_\nu > h(x_2,\dots,x_{\nu-1}) \big\} \cap \{x_1=1\}\cap \bigcap_{k=2}^{\nu-1} \big\{|x_k|< \varepsilon\big\} \cap \big\{|x_\nu|< C\varepsilon\big\}.
\end{multline*}
It follows that
\begin{multline*}
\partial \Lambda \cap \{x_1>1\}\cap \bigcap_{k=2}^{\nu-1} \big\{|x_k|< \varepsilon x_1\big\} \cap \big\{|x_\nu|< C\varepsilon x_1\big\}\\
=\bigg\{(x_1,\dots,x_\nu): x_1>1, \, |x_k|< \varepsilon x_1, \,k=2,\dots,\nu-1,\\
\qquad x_\nu= x_1 h\Big(\dfrac{x_2}{x_1},\dots,\dfrac{x_{\nu-1}}{x_1}\Big)\bigg\}.
\end{multline*}
As $S$ coincides with $\partial \Lambda$ for $|x|>b$, one can use $(x_1,\dots,x_{\nu-1})$ as local coordinates
on a part $S_{b,\varepsilon}$ of $S$ defined as $S_{b,\varepsilon}:=X(\Pi_{b,\varepsilon})$, where
\begin{gather*}
\Pi_{b,\varepsilon}:= \big\{ (x_1,\dots,x_{\nu-1}): x_1\ge b, \quad |x_k|< \varepsilon x_1, \quad k=2,\dots,\nu-1 \big\},\\
X(x_1,\dots,x_{\nu-1}):=
\begin{pmatrix}
x_1\\
\dots\\
x_{\nu-1}\\
x_1 h\Big(\dfrac{x_2}{x_1},\dots,\dfrac{x_{\nu-1}}{x_1}\Big)
\end{pmatrix}.
\end{gather*}
In particular, for the associated metric tensor $g_{jk}$ we have
\[
g_{jk}(x_1,\dots,x_{\nu-1})=\dfrac{\partial X}{\partial x_j} \cdot \dfrac{\partial X}{\partial x_k}=\delta_{jk}+a_{jk},
\]
where $\delta_{jk}$ is Kronecker delta,
\begin{align*}
a_{11}&= \Big[ h\Big(\dfrac{x_2}{x_1},\dots,\dfrac{x_{\nu-1}}{x_1}\Big) - \sum_{\ell=2}^{\nu-1} \dfrac{x_{\ell}}{x_1} h'_{\ell-1}\Big(\dfrac{x_2}{x_1},\dots,\dfrac{x_{\nu-1}}{x_1}\Big) \Big]^2,\\
a_{1j}=a_{j1}&= \Big[ h\Big(\dfrac{x_2}{x_1},\dots,\dfrac{x_{\nu-1}}{x_1}\Big) - \sum_{\ell=2}^{\nu-1} \dfrac{x_{\ell}}{x_1} h'_{\ell-1}\Big(\dfrac{x_2}{x_1},\dots,\dfrac{x_{\nu-1}}{x_1}\Big) \Big]\\
& \quad \times h'_{j-1}\Big(\dfrac{x_2}{x_1},\dots,\dfrac{x_{\nu-1}}{x_1}\Big), \quad j=2,\dots,\nu-1,\\
a_{jk}&= h'_{j-1}\Big(\dfrac{x_2}{x_1},\dots,\dfrac{x_{\nu-1}}{x_1}\Big) h'_{k-1}\Big(\dfrac{x_2}{x_1},\dots,\dfrac{x_{\nu-1}}{x_1}\Big), \quad j,k=2,\dots,\nu-1,
\end{align*}
and $h'_\ell$ stands for the partial derivative of $h$ with respect to the $\ell$th variable.
In particular, one may assume that the parameter $\varepsilon$ in the above constructions
is sufficiently small to have $\big\|(a_{jk})\big\|\le \frac{1}{2}$, which then implies, for   $x\in \Pi_{b,\varepsilon}$,
\begin{gather}
  \label{eq-est2}
\dfrac{1}{2}\le (g_{jk})\le \dfrac{3}{2},\quad (g^{jk}):=(g_{jk})^{-1} \le 2,\\
  \label{eq-est3}
\Big(\dfrac{1}{2}\Big)^{(\nu-1)/2}\le\sqrt{\det g} \le \Big(\dfrac{3}{2}\Big)^{(\nu-1)/2}.
\end{gather}
In addition, let us pick $A\in \big(0, H(M)\big)$, then due to the continuity of $H$
we may assume that $\varepsilon$ is sufficiently small to have
\[
H\big( X(1,x_2, \dots, x_{\nu-1})\big) \ge A \text{ for } |x_k|\le \varepsilon, \quad k=2,\dots,\nu-1,
\]
and, hence,
\[
H\big( X(x_1,x_2, \dots, x_{\nu-1})\big) \ge \dfrac{A}{x_1}
\text{ for $x\in \Pi_{b,\varepsilon}$}.
\]
For $\varphi\in C^\infty_c(\Pi_{b,\varepsilon})$ define $v\equiv v_\varphi\in H^1(S)$ through
\[
v\big(X(x_1,x_2, \dots, x_{\nu-1})\big):=\varphi(x_1,\dots, x_{\nu-1})
\]
and extend it by zero to the whole of $S$. Using the representation
$\dd \sigma=\sqrt{\det g} \,\dd x_1\dots\dd x_{\nu-1}$
and the estimates \eqref{eq-est2} and \eqref{eq-est3}  we obtain, as $R\ge R_1$,
\begin{multline*}
\dfrac{q^\Theta_{R\alpha}(\widetilde f,\widetilde f)}{\|\widetilde  f\|^2_{L^2(\Theta)}}
\le -\alpha^2 R^2\\
+ \dfrac{c_1\displaystyle\int_{\Pi_{b,\varepsilon}} |\nabla \varphi|^2 \dd x_1\dots\dd x_{\nu-1}
- c_2 \alpha R\displaystyle\int_{\Pi_{b,\varepsilon}} \dfrac{\varphi^2}{x_1} \, \dd x_1\dots\dd x_{\nu-1}}{\displaystyle\int_{\Pi_{b,\varepsilon}} \varphi^2 \dd x_1\dots\dd x_{\nu-1}}
 +c_3
\end{multline*}
with
$c_1:= 2\cdot 3^{(\nu-1)/2}b_1>0$, $c_2:= A\cdot 3^{(1-\nu)/2}b_2>0$ and $c_3:=b_3>0$.
Now let us pick $\phi_j \in C^\infty_c(\RR^{\nu-1})$ with $\supp\, \phi_j\subset \Pi_{b,\varepsilon}$, $j=1,\dots,N$,
not identically zero and with mutually disjoint supports
and set
\begin{gather*}
\varphi_{j,R}(x):=\phi_j\Big(\dfrac{x}{\sqrt R}\Big), \\
m:=\sup\Big\{x_1: \text{ there exists }(x_1,\dots x_\nu)\in \bigcup_{j=1}^N \supp\, \phi_j\Big\}.
\end{gather*}
For $R\ge 1$ we still have $\varphi_{j,R}\in C^\infty_c(\Pi_{b,\varepsilon})$ and these new functions
still have mutually disjoint supports. Furthermore, set $v_{j,R}:=v_{\varphi_{j,R}}$
and $\widetilde f_{j,R}:=\widetilde f_{v_{j,R}}$, $j=1,\dots,N$, see \eqref{eq-fv}, then, by construction, the functions $\widetilde f_{j,R}$
are not identically zero and have mutually disjoint supports, and
\begin{multline*}
\dfrac{q^\Theta_{R\alpha}(\widetilde f_{j,R},\widetilde f_{j,R})}{\|\widetilde f_{j,R}\|^2_{L^2(\Theta)}}
\le -\alpha^2 R^2\\
+ \dfrac{\dfrac{c_1}{R }\displaystyle\int_{\RR^{\nu-1}} |\nabla \phi_j|^2 \dd x_1\dots\dd x_{\nu-1}
- \dfrac{c_2 \alpha \sqrt{R}}{m}\displaystyle\int_{\RR^{\nu-1}} \phi_j^2 \, \dd x_1\dots\dd x_{\nu-1}}{\displaystyle\int_{\RR^{\nu-1}} \phi_j^2 \dd x_1\dots\dd x_{\nu-1}}
+c_3.
\end{multline*}
In particular, for sufficiently large $R$ we have the strict inequalities
\[
\dfrac{q^\Theta_{R\alpha}(\widetilde f_{j,R},\widetilde f_{j,R})}{\|\widetilde  f_{j,R}\|^2_{L^2(\Theta)}} <-\alpha^2 R^2.
\]
Remark that by construction for sufficiently large $R$ one also has $\widetilde f_{j,R}(x)=0$ for $|x|\le b+1$.
In particular, one can define $f_{j,R}\in H^1(\Lambda)$ by $f_{j,R}=\widetilde f_{j,R}$ on $\Lambda\cap\big\{|x|\ge b\big\}$
and $f_{j,R}=0$ on $\Lambda\cap\big\{|x|\le b\big\}$, then by \eqref{eq-RR} we have, as $R$ is large,
\[
\dfrac{q^\Lambda_{R\alpha}(f_{j,R},f_{j,R})}{\|f_{j,R}\|^2_{L^2(\Lambda)}}< -\alpha^2 R^2.
\]
Finally, define $F_j(\cdot )\equiv F_{j,R}(\cdot):=R^{\nu/2}f_{j,R}(\cdot/R)$, then \eqref{eq-minmin} holds
due to \eqref{eq-FF}.
By construction, $F_j$ are not identically zero, have mutually disjoint supports
and vanish in any prescribed  ball if $R$ is large. Hence, the family $(F_j)$ satisfies all the requested conditions.
\end{proof}

\begin{theorem}\label{thhh}
Let $\nu\ge 3$ and $\Omega\subset \RR^\nu$ be a conical domain smooth at infinity.
Assume that outside any ball there is a point at the boundary of $\Omega$
at which the mean curvature is strictly positive, then for any $\alpha>0$
the discrete spectrum of $Q^\Omega_\alpha$ is infinite.
\end{theorem}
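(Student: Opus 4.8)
The plan is to combine the min-max principle with Lemma~\ref{eq-lemma}, reducing the whole question to the conical part of $\Omega$. By Theorem~\ref{thm1a} one has $\inf\spece Q^\Omega_\alpha=-\alpha^2$, so the discrete spectrum consists exactly of the eigenvalues in $(-\infty,-\alpha^2)$, and it is infinite if and only if $Q^\Omega_\alpha$ has at least $N$ eigenvalues below $-\alpha^2$ for every $N\in\NN$. By the min-max dichotomy recalled in the introduction it therefore suffices to produce, for each $N$, an $N$-dimensional subspace $L\subset H^1(\Omega)$ on which $q^\Omega_\alpha(u,u)<-\alpha^2\|u\|^2_{L^2(\Omega)}$ for all $u\in L\setminus\{0\}$, since this forces $E_N(Q^\Omega_\alpha)<-\alpha^2=\inf\spece Q^\Omega_\alpha$.

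I would build these subspaces from the functions supplied by Lemma~\ref{eq-lemma}. Write $\Lambda:=\Lambda(\Omega)$ and, translating if necessary, place its vertex at the origin; fix $r_0>0$ large enough that $\Omega$ coincides with $\Lambda$ (hence $\partial\Omega$ with $\partial\Lambda$, with identical mean curvature) outside the ball $\{|x|\le r_0\}$. The hypothesis yields, outside this ball, a boundary point $s$ of $\Omega$ with $H(s)>0$; there $s\in\partial\Lambda\setminus\{0\}$ and $H(s)>0$, so the hypothesis of Lemma~\ref{eq-lemma} is met. Applying that lemma with the given $r_0$ and any prescribed $N$ produces $F_1,\dots,F_N\in H^1(\Lambda)\setminus\{0\}$ with mutually disjoint supports, each vanishing on $\{|x|\le r_0\}$. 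Since every $F_j$ is supported in the region $\{|x|>r_0\}$ where $\Omega$ and $\Lambda$ coincide, its extension by zero lies in $H^1(\Omega)$ and satisfies $q^\Omega_\alpha(F_j,F_j)=q^\Lambda_\alpha(F_j,F_j)$ and $\|F_j\|_{L^2(\Omega)}=\|F_j\|_{L^2(\Lambda)}$.

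Now set $L:=\mathrm{span}\{F_1,\dots,F_N\}$. Disjointness of supports makes the $F_j$ linearly independent, so $\dim L=N$, and it kills every cross term: for $u=\sum_j c_jF_j$ one has $q^\Omega_\alpha(u,u)=\sum_j|c_j|^2q^\Omega_\alpha(F_j,F_j)$ and $\|u\|^2_{L^2(\Omega)}=\sum_j|c_j|^2\|F_j\|^2_{L^2(\Omega)}$. The Rayleigh quotient on $L$ is thus a convex combination of the individual quotients $q^\Omega_\alpha(F_j,F_j)/\|F_j\|^2_{L^2(\Omega)}$ and is bounded above by their maximum, whence $E_N(Q^\Omega_\alpha)\le\max_j q^\Omega_\alpha(F_j,F_j)/\|F_j\|^2_{L^2(\Omega)}$.

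The one genuinely delicate point, and the step I would treat with care, is \emph{strictness}. As stated, property~(c) of Lemma~\ref{eq-lemma} gives only $q^\Lambda_\alpha(F_j,F_j)\le-\alpha^2\|F_j\|^2$, which would merely yield $E_N\le-\alpha^2$; this cannot by itself separate a true eigenvalue from the value $\inf\spece=-\alpha^2$, and the min-max dichotomy would not conclude. The construction inside the proof of Lemma~\ref{eq-lemma} does, however, give the strict inequality: for large $R$ the estimate there reads $q^\Theta_{R\alpha}(\widetilde f_{j,R},\widetilde f_{j,R})/\|\widetilde f_{j,R}\|^2\le-\alpha^2R^2+o(1)$ with the bracketed correction dominated by the negative curvature term (whose coefficient grows in magnitude like $\sqrt R$ while the remaining corrections stay bounded), so the quotient is strictly below $-\alpha^2R^2$; rescaling via the exact identity~\eqref{eq-FF} transfers this strictness to $q^\Lambda_\alpha(F_j,F_j)<-\alpha^2\|F_j\|^2$. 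Using this strict form, each individual quotient, and hence their maximum, lies strictly below $-\alpha^2$, so $E_N(Q^\Omega_\alpha)<-\alpha^2$ for every $N$. Consequently $Q^\Omega_\alpha$ has infinitely many eigenvalues below $-\alpha^2$, i.e. its discrete spectrum is infinite.
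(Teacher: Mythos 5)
Your proposal is correct and follows essentially the same route as the paper: extend the functions of Lemma~\ref{eq-lemma} by zero to $\Omega$ (legitimate since they vanish on a ball containing the region where $\Omega$ and $\Lambda$ differ), use disjointness of supports to control the Rayleigh quotient on the spanned $N$-dimensional subspace, and conclude via the min-max principle that $E_N(Q^\Omega_\alpha)<-\alpha^2=\inf\spece Q^\Omega_\alpha$ for every $N$. Your remark on strictness is well taken: the paper's proof of Theorem~\ref{thhh} likewise asserts the strict inequality, which is justified not by the stated form \eqref{eq-minmin} of the lemma but by the strict bounds established inside its proof, exactly as you resolve it.
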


\begin{proof}
Consider the cone $\Lambda=\Lambda(\Omega)$. Without loss of generality
one may assume that the origin is a vertex of $\Lambda$. Let $r_0>0$
be such that $\Lambda$ and $\Omega$ coincide in $\big\{x\in \RR^\nu: |x|> r_0-1\big\}$.
Take any $N\in\NN$ and construct functions $F_1,\dots,F_N\in H^1(\Lambda)$ as in Lemma \ref{eq-lemma}.
The subspace $L$ spanned by the functions $F_j$ is then $N$-dimensional.
For $F\in L$, define $\widetilde F\in H^1(\Omega)$ by $\widetilde F=F$ on $\Lambda\cap\Omega$
and $\widetilde F=0$ on the remaining part of $\Omega$. 
We have then $q^\Omega_\alpha(\widetilde F,\widetilde F)=q^\Lambda_\alpha(F,F)$
and $\|\widetilde F\|_{L^2(\Omega)}=\|F\|_{L^2(\Lambda)}$ for any $F\in L$, and the subspace
$\widetilde L:=\{\widetilde F: F\in L\}\subset H^1(\Omega)$ is $N$-dimensional.
Due to the choice of $F_j$ we have
\begin{multline*}
E_N(Q^\Omega_\alpha)\le \sup_{G\in \widetilde L,\, G\not\equiv 0}
\dfrac{q^\Omega_\alpha(G,G)}{\|G\|^2_{L^2(\Omega)}}
=\sup_{F\in L,\, F\not\equiv 0}
\dfrac{q^\Omega_\alpha(\widetilde F, \widetilde F)}{\|\widetilde F\|^2_{L^2(\Omega)}}\\
=\sup_{F\in L,\, F\not\equiv 0}
\dfrac{q^\Lambda_\alpha(F, F)}{\|F\|^2_{L^2(\Lambda)}}
<-\alpha^2=\inf\spece Q^\Omega_\alpha.
\end{multline*}
Hence, $Q^\Omega_\alpha$ has at least $N$ eigenvalues in $(-\infty,-\alpha^2)$
by the min-max principle. As $N$ is arbitrary, the result follows.
\end{proof}

\section{Conical domains in three dimensions}

It appears that the main assumption of Theorem~\ref{thhh} (the positivity of the mean curvature on an unbounded set)
is quite simple to check for three-dimensional cones, as the following assertion shows.

\begin{lemma}\label{thmlast}
Let $\Lambda$ be a cone in $\RR^3$ with a simply connected smooth cross section $\Sigma:=\Sigma_\Lambda\subset \SSS^2$.
Assume that the set $\RR^\nu\setminus\Lambda$ is \emph{not} convex, then there is half-line
on the boundary of $\partial\Lambda$ on which the mean curvature is strictly positive.
\end{lemma}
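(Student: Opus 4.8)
The plan is to prove the \emph{contrapositive}: if the mean curvature $H$ of $\partial\Lambda$ (taken, as always, with respect to the outer normal $n$ of $\Lambda$) satisfies $H\le 0$ on the whole of $\partial\Lambda\setminus\{0\}$, then $\RR^3\setminus\Lambda$ is convex. The first step is a reduction using the special geometry of a cone. A cone is a developable ruled surface, so along each generating ray the tangent plane is constant; consequently the ray direction is a principal direction with vanishing principal curvature, the Gaussian curvature vanishes, and $H=\tfrac12 k$, where $k$ is the single transverse principal curvature. In particular the sign of $H$ is constant along each ray, so it suffices to control $H$ on the cross-section curve $\gamma:=\partial\Sigma_\Lambda\subset\SSS^2$. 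Writing $C:=\RR^3\setminus\Lambda$, the outer normal of $C$ equals $-n$, so the shape operator of $\partial C$ is the negative of that of $\partial\Lambda$; hence $H\le 0$ on $\partial\Lambda$ is \emph{equivalent} to both principal curvatures of $\partial C$ being $\ge 0$, that is, to $C$ being \emph{locally convex} along its boundary.

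Next I would record the relevant topology. Since $\Sigma:=\Sigma_\Lambda$ is simply connected with smooth boundary, $\overline\Sigma$ is a compact simply connected surface with boundary, hence a closed disk, so $\gamma=\partial\Sigma$ is a single smooth Jordan curve and it splits $\SSS^2$ into the two disks $\Sigma$ and $\Sigma_C:=\overline{\SSS^2\setminus\overline\Sigma}$. Thus $C$ is exactly the cone over the closed disk $\Sigma_C$, and the local convexity established above says precisely that the geodesic curvature of the single simple closed curve $\gamma=\partial\Sigma_C$, computed with respect to the region $\Sigma_C$, is nonnegative everywhere.

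The heart of the argument is then a spherical local-to-global convexity statement: a simple closed $C^2$ curve on $\SSS^2$ with everywhere nonnegative geodesic curvature bounds a geodesically convex disk contained in a closed hemisphere. I would prove this in the spirit of the planar Hadamard theorem: at each point $p\in\gamma$ the tangent great circle supports $\gamma$ locally (by $\kappa_g\ge 0$), and the goal is to promote this to a \emph{global} support property, namely that $\Sigma_C$ lies in the closed hemisphere bounded by that great circle. Because $\gamma$ is a single simple closed curve bounding the disk $\Sigma_C$, a connectedness/continuity argument (tracking the first point at which $\gamma$ would meet the open complementary hemisphere) rules out any global crossing; intersecting all these supporting hemispheres exhibits $\Sigma_C$ as geodesically convex and contained in a closed hemisphere. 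Finally, the cone over a geodesically convex subset of a closed hemisphere is a convex cone, so $C$ is convex, which is the desired contrapositive.

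The main obstacle is exactly this last local-to-global passage, and within it two delicate points deserve care. First, one must ensure \emph{hemisphere containment}: a region whose boundary is locally convex but which wraps beyond a hemisphere could fail to yield a convex cone, and it is the simple-connectedness — the single boundary curve — that excludes this, for instance via the Gauss--Bonnet identity $\int_\gamma \kappa_g\,\dd s = 2\pi - \mathrm{area}(\Sigma_C)\ge 0$, which forces $\mathrm{area}(\Sigma_C)\le 2\pi$. Second, the degenerate equality cases must be admitted as genuine convex borderline configurations: arcs where $\kappa_g\equiv 0$ (flat pieces of $\partial\Lambda$), and the case in which $\gamma$ is a great circle, so that $\Sigma_C$ is a closed hemisphere and $C$ a half-space.
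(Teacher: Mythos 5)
Your overall route coincides with the paper's: argue by contraposition, use the ruled structure of the cone to reduce the sign of $H$ to the sign of the geodesic curvature of the cross-section curve $\gamma=\partial\Sigma$ (the paper gets this from the explicit parametrization $X(s,t)=t\gamma(s)$ and the formula $H(s,t)=\kappa(s)/(2t)$, which is the same fact as your ``developable surface'' reduction), then pass from nonnegative geodesic curvature of $\gamma$ relative to $\Sigma_C$ to geodesic convexity of $\Sigma_C$, and finally observe that the cone over a geodesically convex spherical set is convex. Your curvature bookkeeping, the topological reduction to a single Jordan curve, and the last cone-lifting step are all sound.

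The gap is precisely at the point you yourself call the heart of the argument, and it is genuine. The paper does not prove the local-to-global convexity statement at all: it cites Bryan and Louie (reference \cite{convex}, Proposition~2.1) for the implication ``$\kappa_g\ge 0$ along the simple closed curve implies the complementary region is geodesically convex.'' Your substitute for that citation is not a proof. The phrase ``a connectedness/continuity argument (tracking the first point at which $\gamma$ would meet the open complementary hemisphere) rules out any global crossing'' restates the theorem rather than proving it: at a first crossing of the tangent great circle $G_p$ nothing contradicts $\kappa_g\ge 0$ at that point, so the argument stalls there. Local support at every point plus simplicity of the curve does not yield global support without real work; already in the plane this is the Hadamard-type theorem, whose standard proof needs the monotonicity of the turning angle combined with the Umlaufsatz (this is exactly where simplicity enters, through the total turning being $2\pi$), and the spherical version is strictly less elementary. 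Moreover, your Gauss--Bonnet remark does not close the hole: $\int_\gamma \kappa_g\,\dd s=2\pi-\mathrm{area}(\Sigma_C)\ge 0$ does force $\mathrm{area}(\Sigma_C)\le 2\pi$, but a region of area at most $2\pi$ need not lie in any closed hemisphere, so this inequality cannot be what excludes the wrapping scenario. The repair is easy and is exactly what the paper does: invoke the known spherical result (or supply a complete proof of it); once that statement is available, the rest of your argument goes through as written.
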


\begin{proof}
We may assume that the origin is a vertex of $\Lambda$. By assumption, the boundary $\partial \Sigma$ is a simple smooth curve on $\SSS^2$. Let $\ell$ be its length
and $\gamma:\RR/\ell\ZZ\to\SSS^2\subset\RR^3$ be its arc-length parametrization chosen such that
the vector $\gamma\times \gamma'$ points to the outside of $\Lambda$. It is elementary to check that
the map
\[
(\RR/\ell\ZZ)\times \RR_+ \ni (s,t)\mapsto X(s,t):=t\gamma(s) 
\]
gives a parametrization of the boundary of $\Lambda$ and that the mean curvature
$H(s,t)$ at the point $X(s,t)$ is given by $H(s,t)=\kappa(s) / (2 t)$, where
$\kappa(s)=\det\big(\gamma(s),\gamma'(s),\gamma''(s)\big)$ is the geodesic curvature
of $\partial\Sigma$ at the point $\gamma(s)$. If $\kappa\le 0$ at all points, then the complement $\SSS^2\setminus \Sigma$
is geodesically convex, see e.g. \cite[Proposition 2.1]{convex} and then the complement $\RR^3\setminus\Lambda$
in its turn should be convex too, which contradicts the assumption. Therefore, there exists $s_0$
such that $\kappa(s_0)>0$, and then $H(s_0,t)>0$ for all $t\in \RR_+$.
\end{proof}

The combination of Theorem~\ref{thhh} with Lemma~\ref{thmlast} gives the following result:
\begin{cor}\label{corlast}
Let $\Omega\subset\RR^3$ be a conical domain smooth at infinity.
Assume that the associated cone $\Lambda(\Omega)$ has a simply connected cross section
and  that the complement $\RR^3\setminus \Lambda(\Omega)$ is not a convex set,
then $Q^\Omega_\alpha$ has an infinite discrete spectrum for any $\alpha>0$.
\end{cor}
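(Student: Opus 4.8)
The plan is to obtain the statement as a direct concatenation of Lemma~\ref{thmlast} and Theorem~\ref{thhh}; the only real work is to transfer the curvature information from the associated cone $\Lambda=\Lambda(\Omega)$ to the actual domain $\Omega$.

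First I would apply Lemma~\ref{thmlast} to $\Lambda$. By hypothesis $\Lambda$ is a cone in $\RR^3$ whose cross section $\Sigma_\Lambda$ is simply connected and smooth, and $\RR^3\setminus\Lambda$ is not convex; these are exactly the assumptions of Lemma~\ref{thmlast}. It therefore yields a point $P\in\partial\Lambda\setminus\{0\}$ such that the mean curvature $H$ is strictly positive on the whole half-line $\RR_+ P\subset\partial\Lambda$. Writing $P=\gamma(s_0)$ in the arc-length parametrization of $\partial\Sigma_\Lambda$ used there, the computation in the proof of Lemma~\ref{thmlast} gives $H\big(X(s_0,t)\big)=\kappa(s_0)/(2t)>0$ for every $t>0$. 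In particular the mean curvature remains strictly positive along this half-line for arbitrarily large $t$, even though it decays to zero at infinity.

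Next I would pass from $\partial\Lambda$ to $\partial\Omega$. Since $\Omega$ is smooth at infinity with associated cone $\Lambda$, it coincides with $\Lambda$ outside some ball $B_{r_0}$, and hence $\partial\Omega$ and $\partial\Lambda$ coincide on $\{|x|>r_0\}$, so that the outer unit normals and the mean curvatures agree on this set. The half-line $\RR_+ P$ leaves every ball, so for each $R>0$ one can choose $t>\max(R,r_0)$ with $X(s_0,t)\in\partial\Omega$ and $H\big(X(s_0,t)\big)>0$. Thus outside any ball there is a boundary point of $\Omega$ at which the mean curvature is strictly positive, which is precisely the hypothesis of Theorem~\ref{thhh}.

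Finally, Theorem~\ref{thhh} applies verbatim in the case $\nu=3$ and yields that the discrete spectrum of $Q^\Omega_\alpha$ is infinite for every $\alpha>0$, as claimed. I do not expect any genuine obstacle: the argument simply chains together the two earlier results. The only point requiring a word of care is the observation that the strict positivity of $H$ along the half-line persists for large $t$, so that the unbounded family of curvature-positive boundary points of $\Lambda$ survives the replacement of $\Lambda$ by $\Omega$ far from the vertex.
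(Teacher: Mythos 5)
Your proposal is correct and follows exactly the paper's route: the paper proves Corollary~\ref{corlast} precisely by combining Lemma~\ref{thmlast} (which produces a half-line on $\partial\Lambda$ with strictly positive mean curvature) with Theorem~\ref{thhh}. Your added detail---that since $\Omega$ and $\Lambda$ coincide outside a ball, the curvature-positive points far along the half-line lie on $\partial\Omega$, so the hypothesis of Theorem~\ref{thhh} is met---is exactly the (implicit) glue the paper relies on.
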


We point out the following fact concerning the three-dimensional smooth cones:

\begin{cor}\label{corlast2}
Let $\Lambda\subset\RR^3$ be a cone with a simply connected smooth cross section and $\alpha>0$, then
the discrete spectrum of $Q^\Lambda_\alpha$ is either infinite or empty.
\end{cor}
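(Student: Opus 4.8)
The plan is to reduce everything to the two corollaries already established, organising the argument as a clean dichotomy governed by the convexity of the complement $\RR^3\setminus\Lambda$. The preliminary point to settle is that the cone $\Lambda$ itself qualifies as a conical domain smooth at infinity: since $\Lambda$ coincides with itself outside every ball, its associated cone is $\Lambda(\Lambda)=\Lambda$, and this cone is smooth precisely because the cross section $\Sigma_\Lambda$ is smooth by hypothesis. Consequently the results of the previous sections are all applicable with the choice $\Omega=\Lambda$.

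I would then split into two exhaustive and mutually exclusive cases according to whether the complement is convex or not. If $\RR^3\setminus\Lambda$ is convex, Corollary~\ref{cor1} applied with $\Omega=\Lambda$ yields $\spec Q^\Lambda_\alpha=[-\alpha^2,+\infty)$, so the discrete spectrum is empty for every $\alpha>0$. If instead $\RR^3\setminus\Lambda$ is not convex, then the cross section is simply connected and smooth while the complement fails to be convex, so the hypotheses of Corollary~\ref{corlast} are met; applying it with $\Omega=\Lambda$ produces an infinite discrete spectrum for every $\alpha>0$. Combining the two cases gives the asserted dichotomy: the discrete spectrum is either empty or infinite.

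There is no genuine obstacle in this argument, as all the analytic work has already been carried out in the constructions of Lemma~\ref{eq-lemma} and Lemma~\ref{thmlast} and packaged into the two corollaries. The only step deserving a moment's attention is the verification that $\Lambda$ is a conical domain smooth at infinity in its own right, so that the earlier statements---phrased for conical domains rather than for cones directly---may legitimately be invoked with $\Omega=\Lambda$.
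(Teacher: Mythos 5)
Your proposal is correct and is essentially identical to the paper's own proof: the same dichotomy on the convexity of $\RR^3\setminus\Lambda$, with Corollary~\ref{cor1} handling the convex case and Corollary~\ref{corlast} the non-convex case. The only addition is your explicit check that $\Lambda$ is itself a conical domain smooth at infinity (so that $\Omega=\Lambda$ is admissible), which the paper leaves implicit; this is a harmless and legitimate clarification.
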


\begin{proof} If $\RR^3\setminus\Lambda$ is convex, then the discrete spectrum is empty
by Corollary~\ref{cor1}, otherwise it is infinite by Corollary~\ref{corlast}.
\end{proof}

Finally, the combination of Theorem~\ref{thm2c} with Corollary~\ref{corlast} gives the following result:
\begin{cor}\label{corlast3}
Let $\alpha>0$ and $\Omega\subset\RR^3$ be a conical domain such that the associated cone $\Lambda(\Omega)$
has a simply connected smooth cross section, then
the discrete spectrum of $Q^\Omega_\alpha$ is finite if and only if the complement
$\RR^\nu\setminus\Lambda(\Omega)$ is a convex set.
\end{cor}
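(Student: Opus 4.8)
The plan is to establish the two implications separately by invoking the results already in place, since the statement is a clean biconditional whose two directions are covered by Theorem~\ref{thm2c} and Corollary~\ref{corlast} respectively. Before doing so, I would first record that the standing hypotheses force $\Omega$ to be smooth at infinity: by assumption the cross section $\Sigma_{\Lambda(\Omega)}$ is a simply connected smooth domain of $\SSS^2$ with non-empty $C^2$ boundary, so by the characterization recalled in the introduction (for $\nu=3$, a cone is smooth iff its cross section is a domain of $\SSS^{2}$ with non-empty $C^2$ boundary) the associated cone $\Lambda(\Omega)$ is smooth, and hence $\Omega$ is a conical domain smooth at infinity. This is precisely the regularity needed to apply both cited results in the present three-dimensional setting.

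For the direction showing convexity is sufficient for finiteness, I would suppose that $\RR^3\setminus\Lambda(\Omega)$ is convex. Then $\Omega$ satisfies all the hypotheses of Theorem~\ref{thm2c}: it is a conical domain, smooth at infinity by the remark above, with the complement of its associated cone convex. Theorem~\ref{thm2c} then yields directly that the discrete spectrum of $Q^\Omega_\alpha$ is finite for every $\alpha>0$.

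For the converse direction I would argue by contraposition. Suppose $\RR^3\setminus\Lambda(\Omega)$ is \emph{not} convex. Since the cross section is simply connected and smooth, this places $\Omega$ exactly in the setting of Corollary~\ref{corlast}, whose conclusion is that $Q^\Omega_\alpha$ has an infinite discrete spectrum for every $\alpha>0$. Taking the contrapositive, finiteness of the discrete spectrum forces $\RR^3\setminus\Lambda(\Omega)$ to be convex. Combining the two directions gives the stated equivalence.

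There is no genuine obstacle here, as the substance of the corollary resides entirely in the two ingredients it combines. The only point deserving a moment's care is the verification that the smoothness of the cross section already guarantees smoothness at infinity, so that both Theorem~\ref{thm2c} and Corollary~\ref{corlast}—each of which is stated for conical domains smooth at infinity—are indeed applicable under the weaker-looking hypotheses of the present statement.
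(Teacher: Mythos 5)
Your proof is correct and follows exactly the paper's route: the convexity direction is Theorem~\ref{thm2c}, the non-convexity direction is Corollary~\ref{corlast} (via contraposition). Your extra remark that smoothness of the cross section yields smoothness at infinity is a sensible check of a point the paper leaves implicit, but the argument is otherwise the same combination of the two results.
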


\begin{rem}
We remark that there is no analog of Corollaries~\ref{corlast} and~\ref{corlast2} for non-smooth cones.
As an example one can consider $Q^\Omega_\alpha$ for the convex cone $\Omega:=(\RR_+)^\nu\subset \RR^\nu$.
The separation of variables gives, for any $\alpha>0$,
\begin{gather*}
\spece Q^\Omega_\alpha=\big[-(\nu-1) \alpha^2,+\infty\big),\\
\specd Q^\Omega_\alpha=\{-\nu \alpha^2\},\quad
\dim\ker(Q^\Omega_\alpha+\nu \alpha^2)=1.
\end{gather*}
The example also shows that Corollary~\ref{corlast3} cannot be extended to the two-dimensional case.
\end{rem}

\section*{Acknowledgments}

The work was partially supported by GDR CNRS 2279 DYNQUA and
ANR NOSEVOL 2011 BS01019 01.

\end{document}